\tikzstyle{startstop} = [rectangle, rounded corners, minimum width=3cm, minimum height=1cm,text centered, draw=black, fill=red!0]
\tikzstyle{io} = [trapezium, trapezium left angle=70, trapezium right angle=110, minimum width=3cm, minimum height=1cm, text centered, draw=black, fill=blue!0]
\tikzstyle{process} = [rectangle, minimum width=3cm, minimum height=1cm, text centered, draw=black, fill=orange!0]
\tikzstyle{decision} = [diamond, minimum width=3cm, minimum height=1cm, text centered, draw=black, fill=green!0]
\tikzstyle{arrow} = [thick,->,>=stealth]  
\newtheorem{thm}{Theorem}[section]
\theoremstyle{definition}
\newtheorem{defn}[thm]{Definition}
\newtheorem{prop}[thm]{Proposition}
\newtheorem{lem}[thm]{Lemma}
\theoremstyle{remark}
\title{Open Homomorphisms between $m$-step Solvable Galois Groups Compatible with the Cyclotomic Characters}
\author{
  Yu Mao\\
  \texttt{ym382@exeter.ac.uk}
  \and
  Mohamed Sa\"idi\\
  \texttt{m.saidi@exeter.ac.uk}
}
\date{}
\begin{document}
\maketitle
\begin{abstract}
In \cite{Ho3}, Hoshi proved that open homomorphisms between solvably closed Galois groups of number fields which are compatible with the cyclotomic characters arise from field embeddings. In this paper, we will prove an $m$-step solvable version of Hoshi's result. More precisely, if $K$ and $L$ are number fields, we will prove that given an open homomorphism between the maximal $m+3$-step solvable Galois groups of $K$ and $L$, where $m \geq 2$, and the induced open homomorphism between the corresponding maximal $m$-step solvable Galois groups, then the latter arises from a field embedding if and only if the open homomorphism between the $m+3$-step solvable (and hence also the $m$-step solvable) Galois groups is compatible with the cyclotomic characters of $K$ and $L$.
\end{abstract}
\newpage
\section*{Conventions}
Throughout this paper, we use the following conventions.\par
\begin{itemize}
\item Let $K$ be a number field, we write $\mathscr{P}_K^{\text{fin}}$ for the set of non-archimedean primes of $K$. 
\item Let $\mathfrak{p} \in \mathscr{P}_K^{\text{fin}}$, we write $p_{\mathfrak{p}}, d_{\mathfrak{p}},f_{\mathfrak{p}},e_{\mathfrak{p}}$ for the residue characteristic of $\mathfrak{p}$, the degree $[K_{\mathfrak{p}}: \mathbb{Q}_{p_{\mathfrak{p}}}]$, where $K_{\mathfrak{p}}$ is the completion of $K$ at $\mathfrak{p}$, the inertia degree of $\mathfrak{p}$ and the ramification index of $\mathfrak{p}$ respectively.
\item We write $\mathscr{P}_K^{\text{fin},f=1}$ for the subset of $\mathscr{P}_K^{\text{fin}}$ consisting of those primes $\mathfrak{p}$ with $f_{\mathfrak{p}} = 1$. 
\item Let $G$ be a profinite group and let $i \geq 0$ be an integer. We write $G^{[i]}$ for the (closure of the) $i$-th derived subgroup of $G$. Thus,
$$
G^{[0]} := G ~;~ G^{[i]} := \overline{[G^{[i-1]},G^{[i-1]}]}
$$
for all $i \geq 1$.
\item Let $G$ be a profinite group and let $i \geq 0$ be an integer. We write $G^i$ for the maximal $i$-step solvable quotient of $G$, which is defined to be $G/G^{[i]}$. In particular, $G^1$ coincides with the abelianisation of $G$.
\item Let $G$ be a profinite group and let $j \geq i \geq 0$ be integers. We write $G^{[j,i]} := \text{ker}(G^j \twoheadrightarrow G^i)$.

\item Let $G$ be a profinite group. We write $G^{\text{sol}}$ for the maximal prosolvable quotient of $G$. We have
$$
G^{\text{sol}} := \varprojlim_{ n \geq 0} ~G^n.
$$
\item Let $K$ be a number field and fix a separable closure $K^{\text{sep}}$ of $K$. We write $K^{\text{sol}} := (K^{\text{sep}})^{\text{ker}(G_K \twoheadrightarrow G_K^{\text{sol}})}$ for the maximal prosolvable extension of $K$. Let $m \geq 1$, we write $K_m := (K^{\text{sep}})^{G_K^{[m]}}$ for the maximal $m$-step solvable extension of $K$. In particular, $K_1$ coincides with the maximal abelian extension of $K$. 
\item All homomorphisms between profinite groups are assumed to be continuous.
\end{itemize}
\section{Introduction}
Recall that the Neukirch-Uchida Theorem asserts that two number fields $K$ and $L$ are isomorphic if and only if $G_K^{\text{sol}}$ and $G_L^{\text{sol}}$ are isomorphic as profinite groups (c.f. \cite{Uchida1}). It is natural to ask what happens if we replace an isomorphism between $G_K^{\text{sol}}$ and $G_L^{\text{sol}}$ by an open homomorphism? \par 
In 1981, Uchida gave a conditional solution to this question:
\begin{thm}[Theorem 2 in \cite{Uchida3}]
Let $K$ and $L$ be number fields, and let
$$
\sigma: G_L^{\text{sol}} \to G_K^{\text{sol}}
$$
be a continuous homomorphism such that the following condition holds. For each non-archimedean prime $\tilde{\mathfrak{p}}$ of $L^{\text{sol}}$, there exists a non-archimedean prime $\tilde{\mathfrak{q}}$ of $K^{\text{sol}}$ such that $\sigma$ restricts to an open homomorphism $\sigma: D_{\tilde{\mathfrak{p}}} \to D_{\tilde{\mathfrak{q}}}$ where $D_{\tilde{\mathfrak{p}}}$ and $D_{\tilde{\mathfrak{q}}}$ are decomposition groups at $\tilde{\mathfrak{p}}$ and $\tilde{\mathfrak{q}}$, respectively. Then there exists a unique field embedding $\tau:K^{\text{sol}} \hookrightarrow L^{\text{sol}}$ such that $\tau \sigma(g) = g \tau$ for all $g \in G_{L}^{\text{sol}}$. Furthermore, $\tau$ restricts to a field embedding $K \hookrightarrow L$.
\end{thm}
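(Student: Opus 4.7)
The plan is to construct $\tau$ as a projective limit of field embeddings between the successive layers of the solvable towers $K=K_0\subset K_1\subset K_2\subset\dots$ and $L=L_0\subset L_1\subset L_2\subset\dots$, reducing the problem to a local reconstruction at each prime combined with a class-field-theoretic gluing at each finite level.

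First I would handle the local reconstruction. For each pair $(\tilde{\mathfrak{p}},\tilde{\mathfrak{q}})$ supplied by the hypothesis, the decomposition groups $D_{\tilde{\mathfrak{p}}}$ and $D_{\tilde{\mathfrak{q}}}$ are canonically identified with the prosolvable absolute Galois groups of the $p$-adic fields $L_{\mathfrak{p}}$ and $K_{\mathfrak{q}}$ (writing $\mathfrak{p},\mathfrak{q}$ for the restrictions to $L$ and $K$). The local Neukirch-type theorem for prosolvable Galois groups of $p$-adic fields would then provide a unique field embedding $K_{\mathfrak{q}}^{\text{sol}}\hookrightarrow L_{\mathfrak{p}}^{\text{sol}}$ inducing $\sigma|_{D_{\tilde{\mathfrak{p}}}}$; passage to quotients by inertia and wild inertia moreover yields compatible embeddings of residue fields and matching of Frobenius elements, which also determines the correspondence $\tilde{\mathfrak{p}}\leftrightarrow\tilde{\mathfrak{q}}$ up to Galois conjugation.

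Next I would construct $\tau_n\colon K_n\hookrightarrow L_n$ by induction on $n$. Assuming $\tau_n$ has been built and is compatible with the induced map $\sigma_n\colon G_L^{n}\to G_K^{n}$, the next layer $G_L^{[n+1,n]}\to G_K^{[n+1,n]}$ is an open homomorphism between the abelianisations of $G_{L_n}$ and $G_{K_n}$. By global class field theory this corresponds to a map between the profinite completions of the respective idele class groups. The local embeddings produced in the first step give compatible maps on the local components at every place of $L_n$; the Galois equivariance of $\sigma$ together with the hypothesis at \emph{every} non-archimedean prime of $L^{\text{sol}}$ forces these local maps to assemble into an idelic map which descends to a genuine field embedding $\tau_{n+1}\colon K_{n+1}\hookrightarrow L_{n+1}$ extending $\tau_n$. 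The projective limit of the $\tau_n$ yields the desired $\tau\colon K^{\text{sol}}\hookrightarrow L^{\text{sol}}$, and the restriction $\tau|_K\colon K\hookrightarrow L$ follows by taking fixed fields under $G_L^{\text{sol}}$ and using the Galois-equivariance $\tau\sigma(g)=g\tau$. Uniqueness is forced by the rigidity of each local embedding combined with the openness of $\sigma$.

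The main obstacle is the inductive gluing in the second stage: the local embeddings must assemble coherently at every finite level, and one must rule out twists by global Galois elements that would break the descent from ideles to a global field embedding. This is typically handled by a Chebotarev-type argument matching Frobenius elements at a cofinal family of unramified primes, exploiting the hypothesis at \emph{all} non-archimedean primes rather than just at a density-one subset. This is precisely the delicate step in Uchida's original argument, and the reason one needs the strong local hypothesis in the statement rather than a weaker condition such as compatibility with cyclotomic characters alone.
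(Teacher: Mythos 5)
The paper does not prove this statement itself; it quotes it as Theorem 2 of Uchida's 1981 paper, so the comparison must be against Uchida's original argument and, more importantly, against the internal soundness of your proposal.

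The central step of your proposal — ``the local Neukirch-type theorem for prosolvable Galois groups of $p$-adic fields would then provide a unique field embedding $K_{\mathfrak{q}}^{\mathrm{sol}}\hookrightarrow L_{\mathfrak{p}}^{\mathrm{sol}}$ inducing $\sigma|_{D_{\tilde{\mathfrak{p}}}}$'' — invokes a theorem that does not exist. Absolute Galois groups of $p$-adic fields do \emph{not} determine the field, even up to isomorphism: Jarden--Ritter produced non-isomorphic $p$-adic fields with isomorphic absolute Galois groups, and Mochizuki's local anabelian reconstruction requires the additional datum of the upper-numbering ramification filtration. The situation is only worse for open homomorphisms, and for the prosolvable (rather than full) quotients. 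So there is no canonical local field embedding to feed into your second, class-field-theoretic step, and the whole inductive construction never gets off the ground. This is precisely why Uchida's actual argument does not proceed via local field embeddings at all: it extracts from the decomposition-group hypothesis only the weaker (but sufficient, and actually provable) data of a map of primes together with preservation of Frobenius elements, residue characteristics, and inertia/residue degrees, then first establishes that $\sigma$ is injective with open image, and finally reduces to the $G^{\mathrm{sol}}$ Neukirch--Uchida theorem applied to the finite extension of $K$ cut out by $\sigma(G_L^{\mathrm{sol}})$. Your step two and the concluding Chebotarev remark are in the right spirit, but as written they rest on a false local input, so the proof has a genuine gap.
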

Later in 2022, Hoshi proved the following variant of Theorem 1.1:
\begin{thm}[Theorem 3.4 in \cite{Ho3}]
Let $K$ and $L$ be number fields, and let $\alpha: G_L^{\text{sol}} \to G_K^{\text{sol}}$ be an open homomorphism of profinite groups. Then the followings are equivalent. \par 
(i) $\alpha$ arises uniquely from a field embedding. That is, there exists a unique field embedding $\beta: K^{\text{sol}} \to L^{\text{sol}}$ such that $\beta \alpha(g) = g \beta$ for all $g \in G_L^{\text{sol}}$. \par 
(ii) The following diagram commutes:
$$
\begin{tikzcd}
	{G_L^{\text{sol}}} && {G_K^{\text{sol}}} \\
	& {\widehat{\mathbb{Z}}^{\times}}
	\arrow[from=1-1, to=1-3]
	\arrow["{\chi_L}", from=1-1, to=2-2]
	\arrow["{\chi_K}"', from=1-3, to=2-2]
\end{tikzcd}
$$
where $\chi_L$ and $\chi_K$ are the cyclotomic characters associated to $L$ and $K$, respectively.
\end{thm}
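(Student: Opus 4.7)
The implication (i) $\Rightarrow$ (ii) is essentially formal: any field embedding $\beta: K^{\text{sol}} \hookrightarrow L^{\text{sol}}$ must send roots of unity to roots of unity, and this translates, via the defining formula $\zeta^{\chi_L(g)} = g(\zeta)$ for $\zeta \in \mu_\infty$, into the commutativity of the displayed triangle. So the main content lies in the converse direction.

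For (ii) $\Rightarrow$ (i), the natural plan is to reduce to Uchida's Theorem 1.1 by verifying its decomposition-group hypothesis for $\alpha$. Concretely, for each non-archimedean prime $\tilde{\mathfrak{p}}$ of $L^{\text{sol}}$ one must exhibit a non-archimedean prime $\tilde{\mathfrak{q}}$ of $K^{\text{sol}}$ such that $\alpha$ restricts to an open map $D_{\tilde{\mathfrak{p}}} \to D_{\tilde{\mathfrak{q}}}$. This reduces the problem to a purely group-theoretic question: recognize decomposition subgroups of $G_L^{\text{sol}}$ and $G_K^{\text{sol}}$ using only the cyclotomic character data.

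The key observation is that decomposition groups should be detectable cyclotomically. Decomposing $\chi_L = (\chi_{L,p})_p$ into $p$-adic components, one has that for $\tilde{\mathfrak{p}}$ lying above the rational prime $p$, the image $\chi_{L,p}(I_{\tilde{\mathfrak{p}}})$ is open in $\mathbb{Z}_p^\times$, whereas inertia subgroups at primes not above $p$ act trivially on $\mu_{p^\infty}$. Combined with the fact that $D_{\tilde{\mathfrak{p}}}/I_{\tilde{\mathfrak{p}}}$ is topologically generated by a Frobenius lift, and with an application of Kummer theory to the abelian subextensions of $L^{\text{sol}}/L$ (so that primes of $L$ can be recovered from ideal-theoretic information visible on $G_L^{[1]}$), one should be able to give a group-theoretic description of decomposition groups that only involves $\chi_L$. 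The assumption that $\alpha$ intertwines $\chi_L$ with $\chi_K$ then transports decomposition groups to decomposition groups, so that Uchida's Theorem 1.1 applies and delivers the unique embedding $\beta$.

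The main obstacle, as I see it, is making the cyclotomic characterization rigid enough to pick out a \emph{particular} prime $\tilde{\mathfrak{q}}$ above each $\tilde{\mathfrak{p}}$ rather than a blurry family; the cyclotomic character sees only the residue characteristic and global ramification data, so additional input is needed to separate distinct primes of the same residue characteristic. I expect one must combine the cyclotomic picture with a Chebotarev-type density argument for finite abelian subextensions (using, say, the fact that $D_{\tilde{\mathfrak{p}}}$ projects to a Frobenius in every finite unramified abelian quotient), and then propagate this matching up the solvable tower by successive approximation, exploiting the openness of $\alpha$ to avoid losing information at each step. Once the decomposition-group compatibility is secured, both the existence and the uniqueness of $\beta$ follow directly from Uchida's theorem.
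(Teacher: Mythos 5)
Your (i) $\Rightarrow$ (ii) argument is correct and standard: a $G_L^{\text{sol}}$-equivariant field embedding preserves roots of unity, hence intertwines the cyclotomic characters. The real content is (ii) $\Rightarrow$ (i), and here your plan --- verify the decomposition-group hypothesis of Uchida's Theorem 1.1 directly from the cyclotomic data and then invoke that theorem --- diverges from the route the paper's proof of Theorem 1.5 is modelled on (and hence, implicitly, from Hoshi's proof of Theorem 3.4 in \cite{Ho3}). You correctly identify the main obstruction, but the proposed workaround does not close it, and this leaves a genuine gap.

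The missing ingredient is not just "picking out a particular $\tilde{\mathfrak{q}}$": it is showing, for a merely \emph{open} homomorphism $\alpha$, that the image of a decomposition subgroup $D_{\tilde{\mathfrak{p}}}$ is even \emph{contained} in some decomposition subgroup of $G_K^{\text{sol}}$. The cyclotomic compatibility constrains what inertia and Frobenius can do on roots of unity, but it does not by itself prevent $\alpha$ from scattering $D_{\tilde{\mathfrak{p}}}$ across $G_K^{\text{sol}}$, nor does it show $\alpha$ is injective on $D_{\tilde{\mathfrak{p}}}$ (which you need for openness of the restriction). A Chebotarev-plus-successive-approximation argument gives density and Frobenius information in finite abelian layers, but it does not supply the needed group-theoretic recognition of decomposition groups inside the target. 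The actual mechanism --- visible in Lemma 3.5 of the present paper, which is modelled on Lemma 3.3 of \cite{Ho3} --- is different: pass to a finite \emph{totally imaginary} subextension $F'/L$ (so that the target has no $2$-torsion), use the cyclotomic compatibility together with Lemma 3.3(i) to show $\alpha$ is injective on a pro-$2$ Sylow subgroup $V$ of $I_{\mathfrak{p}}$, and then recognise the image of $V$ as a $(\star_2)$-subgroup in the sense of Definition 1.20 of \cite{ST1}, which by the local theory there corresponds to a decomposition group. This is the step your outline has no substitute for.

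Moreover, once this restricted prime-matching is in hand, the proof does not feed it back into Uchida's Theorem 1.1. Instead it produces a \emph{Frobenius-preserving} map $A_2(F) \to A_2(F')$ on mod-$2$ abelianisations and invokes Corollary 2.8 of \cite{Ho3} to get a unique embedding $F' \hookrightarrow F$ at each finite totally imaginary level, then takes an inverse limit over such $F'$ (together with a $\beta$-equivariance check) to assemble $\beta$. So even structurally, the reduction target is Corollary 2.8 of \cite{Ho3} (a mod-$2$ abelian statement), not Theorem 1.1 (the full prosolvable decomposition-group statement). Your route is not obviously unworkable, but as written it amounts to restating the problem: "recognise decomposition subgroups group-theoretically from $\chi$" is precisely the nontrivial input that the $(\star_\ell)$-machinery and the totally-imaginary reduction are designed to supply.
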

On the other hand, in 2019, Sa\"idi and Tamagawa proved an $m$-step solvable version of the Neukirch-Uchida Theorem:
\begin{thm}[Theorem 2 in \cite{ST1}]
Let $K,L$ be number fields and let $\sigma_{m+3}: G_L^{m+3} \xrightarrow{\sim} G_K^{m+3}$ be an isomorphism, which induces an isomorphism $\sigma_m: G_L^m \xrightarrow{\sim} G_K^m$. Then the following assertions hold.\par 
(i) If $m \geq 0$, there exists a field isomorphism $\tau_m : K_m \xrightarrow{\sim} L_m$ such that $\sigma_m(g) = \tau_m^{-1} g \tau_m$ for all $g \in G_L^m$, and $\tau_m$ restricts to a field isomorphism $K \xrightarrow{\sim} L$. \par 
(ii) If $m \geq 2$ (resp. $m=1$), then the isomorphism $\tau_m : K_m \xrightarrow{\sim} L_m$ (resp. $\tau: K \xrightarrow{\sim} L$ induced by $\tau_1: K_1 \xrightarrow{\sim} L_1$) in $(i)$ is uniquely determined by the condition in (i).
\end{thm}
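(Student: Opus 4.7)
The plan is to adapt Uchida's classical proof of the Neukirch--Uchida theorem to the $m$-step solvable setting, with the extra three solvable steps providing the room needed for the purely group-theoretic reconstructions that are ``free'' when one has the full prosolvable group.

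First I would show that the isomorphism $\sigma_{m+3}$ carries decomposition subgroups at non-archimedean primes to decomposition subgroups. The key input is a group-theoretic characterization of decomposition groups inside $G_K^{m+3}$: using a Chebotarev-type argument at a sufficiently deep finite solvable layer, one pins down Frobenius-like elements and the closed subgroups they generate via the relations they satisfy modulo iterated commutators. Combined with local class field theoretic structure visible already at the $(m+3)$-step level, this forces $\sigma_{m+3}$ to induce a bijection between primes of $L_{m+3}$ and primes of $K_{m+3}$, matching decomposition (and inertia) subgroups, and hence a compatible bijection at the $m$-step level.

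Second, for each matched pair of primes $(\tilde{\mathfrak{p}},\tilde{\mathfrak{q}})$, I would study the induced isomorphism between the corresponding local $(m+3)$-step solvable Galois groups. The three extra solvable steps are precisely what allow one to recover the local invariants group-theoretically: the residue characteristic $p_{\mathfrak{p}}$, the local degree $d_{\mathfrak{p}}$, the inertia and wild inertia subgroups, the Frobenius class, and---crucially for what follows in the present paper---the local cyclotomic character. The recovery of the cyclotomic character proceeds via Kummer theory applied inside the derived filtration of the local group, together with Hasse--Arf information on the upper ramification filtration, each of which is already visible at the $(m+3)$-step level. Third, I would globalize via Kummer theory: with local cyclotomic characters available at each prime, one assembles global Kummer pairings that recover $(K_m)^\times$ modulo appropriate powers, equivariantly for the Galois action, and standard class field theoretic arguments in the spirit of Uchida then produce the field isomorphism $\tau_m : K_m \xrightarrow{\sim} L_m$ of part~(i), which visibly restricts to a field isomorphism $K \xrightarrow{\sim} L$.

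For the uniqueness statement in (ii), I would observe that for $m \geq 2$ any two candidate $\tau_m$ would differ by an element in the centraliser of the image of $G_L^m$ in $\operatorname{Aut}(K_m)$; the non-abelian structure of $G_K^m$ for $m \geq 2$ kills this centraliser, whereas for $m=1$ the abelian nature of $G_K^1$ only permits uniqueness after descending to the base field $K$. The hard part will be Step~2: producing a clean, functorial group-theoretic recipe for extracting the cyclotomic character (and more generally the local field invariants) from the local $(m+3)$-step solvable Galois group. This is where the exact choice of ``$+3$'' is calibrated, and it is the technical heart of the argument.
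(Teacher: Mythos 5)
This statement is not proved in the paper at all: Theorem 1.3 is quoted verbatim as Theorem~2 of Sa\"idi--Tamagawa \cite{ST1} and used as a black box to motivate Theorem~1.5. There is therefore no in-paper proof to compare your proposal against, and a genuine proof would have to follow \cite{ST1}.

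As a stand-alone reconstruction of the \cite{ST1} argument, your outline has the right overall Uchida-style shape (local characterization of decomposition groups, recovery of local invariants, global Kummer-theoretic assembly), but two of the mechanisms you invoke are not the ones actually used, and they matter. First, the identification of decomposition subgroups of $G_K^{m+3}$ is not a ``Chebotarev-type pinning of Frobenius-like elements''; it is the purely local $(\star_\ell)$-characterization (Definition~1.20 of \cite{ST1}, reproduced as Definition~3.2 in the present paper): one isolates closed subgroups $H$ sitting in $1 \to \mathbb{Z}_\ell \to H \to \mathbb{Z}_\ell \to 1$ with $\mathcal{H}^2(H,\mathbb{F}_\ell) \neq 0$, and these are shown to be exactly the decomposition-type subgroups. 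Chebotarev only enters later to globalize. Second, one does not ``extract'' the cyclotomic character from the abstract local $(m+3)$-step group via Kummer theory and Hasse--Arf; rather, compatibility of $\sigma_m$ with the cyclotomic characters is \emph{deduced} from the matching of decomposition, inertia, residue characteristic and inertia degree, and this is delicate (indeed it is precisely what fails for mere open homomorphisms, which is why the present paper must impose the compatibility with $\chi_K,\chi_L$ as a hypothesis in Theorem~1.5). Finally, for uniqueness in (ii), your centralizer observation is the correct starting point, but the assertion that nonabelianness of $G_K^m$ for $m \geq 2$ ``kills'' the centralizer of $G_L^m$ in $\operatorname{Aut}(L_m)$ needs a real argument; it does not follow formally, and the distinction between $m \geq 2$ (uniqueness of $\tau_m$) and $m = 1$ (uniqueness only of the restriction $\tau: K \xrightarrow{\sim} L$) shows the phenomenon is subtler than a one-line centralizer computation. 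Compare with how the present paper proves uniqueness in its own Theorem~1.5 (Proposition~2.1), via Hoshi's Frobenius-preserving machinery at the level of $A_2(F)$ rather than a centralizer argument.
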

Furthermore, in 2023, Corato and Sa\"idi proved an $m$-step solvable version of Theorem 1.1:
\begin{thm}[Theorem 6.1 in \cite{CS}]
Let $K,L$ be number fields, and let $m \geq 2$ be an integer. Let $\sigma_{m+3}: G_K^{m+3} \to G_L^{m+3}$ be a homomorphism of profinite groups. Consider the homomorphism $\sigma_m: G_K^m \to G_L^m$ induced by $\sigma_{m+3}$. Assume that $\sigma_m$ restricts to an open injection on every $(\star_{\ell})$-subgroup (c.f. Definition 1.20 in \cite{ST1}) of $G_K^{m}$ for every prime number $\ell$, and that $(G_L^{m+3})^{[m-1]} \subset \sigma_{m+3}(G_K^{m+3})$. \par 
Then there exists a unique field embedding $\tau:L_m \hookrightarrow K_m$ satisfying $\tau \sigma_m(g) = g \tau$ for all $g \in G_K^m$. Furthermore, $\tau$ restricts to a field embedding $L \hookrightarrow K$.
\end{thm}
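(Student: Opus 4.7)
The plan is to follow the structure of the $m$-step solvable Neukirch--Uchida argument of Sa\"idi--Tamagawa (Theorem 1.3), adapted to the non-isomorphism setting: the two auxiliary hypotheses in the present statement -- the openness/injectivity on $(\star_\ell)$-subgroups and the containment $(G_L^{m+3})^{[m-1]} \subset \sigma_{m+3}(G_K^{m+3})$ -- are designed precisely to substitute for the data that a full isomorphism would automatically provide. The extra three derived layers between level $m$ and level $m+3$ supply the Kummer-theoretic ``room'' needed to detect the multiplicative groups of the base fields.

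First I would use the $(\star_\ell)$-condition to set up, for each non-archimedean prime $\mathfrak{p}$ of $K$ with residue characteristic $\ell$, a compatible prime $\mathfrak{q}$ of $L$ above the same $\ell$, together with an open injection of decomposition subgroups $D_\mathfrak{p}^{m+3} \hookrightarrow D_\mathfrak{q}^{m+3}$ (and the same on the $m$-level), identified from within the Galois group by the $(\star_\ell)$-formalism of \cite{ST1}. Second, inside each such pair I would isolate the inertia and wild inertia subgroups, and then combine local class field theory with the openness to read off the ramification indices, residue degrees, and the local cyclotomic character. The containment hypothesis on $(G_L^{m+3})^{[m-1]}$ ensures that the graded piece $G^{[m-1]}/G^{[m]}$ -- which, by Kummer theory, is dual to a submodule of $K_{m-1}^\times$ -- lies in the image of $\sigma_{m+3}$, so one may pull back Kummer characters of elements of $L_{m-1}^\times$ to Kummer characters of elements of $K_{m-1}^\times$, compatibly with the local data above.

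Third, I would globalize by assembling the local reconstructions into a candidate multiplicative map $L_m^\times \to K_m^\times$, and then use a ``characterization-of-units'' argument (as in Sa\"idi--Tamagawa) combined with a Chebotarev-style density argument on decomposition data to upgrade this multiplicative map to a field embedding $\tau : L_m \hookrightarrow K_m$. Equivariance $\tau \sigma_m(g) = g \tau$ is built into the construction, uniqueness follows from the centre-freeness of $G_L^m$ for $m \geq 2$, and the restriction to $L \hookrightarrow K$ is automatic since the construction respects the subfield inclusions $L \subset L_m$ and $K \subset K_m$.

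The hardest step, I expect, is the additivity: showing that, even without an isomorphism of Galois groups, the information captured through the $(\star_\ell)$-subgroups together with the derived-subgroup containment is sufficient to recognize the additive structure on units. This is the most delicate point in any Neukirch--Uchida style argument, and here it is compounded by the fact that one must verify that the image $\sigma_{m+3}(G_K^{m+3})$ -- rather than the full $G_L^{m+3}$ -- still supports the Kummer-theoretic mechanism at each of the infinitely many primes simultaneously. The role of the hypothesis $(G_L^{m+3})^{[m-1]} \subset \sigma_{m+3}(G_K^{m+3})$ is precisely to control this, and verifying that it does is the technical heart of the proof.
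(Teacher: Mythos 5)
The statement you are proving is Theorem~1.4 of this paper, but the paper itself does not prove it: it is quoted verbatim (as Theorem~6.1 of \cite{CS}) and used as a black box. There is therefore no in-paper proof to compare your argument against, and the paper's own contribution (Theorem~1.5, proved in Sections~2--3) is a different statement, with cyclotomic-character compatibility replacing the $(\star_\ell)$- and derived-subgroup hypotheses here.

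Taken on its own terms, your outline identifies the correct framework --- the Neukirch--Uchida machinery of \cite{ST1}, run one-directionally using the $(\star_\ell)$-condition to locate decomposition groups, local theory to match residue characteristics and degrees, and Kummer theory (with the buffer of three extra derived layers) to pass to multiplicative groups --- and you correctly diagnose the image-containment hypothesis as the ingredient that replaces surjectivity. But this is a plan, not a proof: the steps from a prime-matching to a multiplicative map, and from a multiplicative map to an additive (hence field) embedding, are exactly where all the work lives, and you give no argument there. More concretely, one claim you state as a fact is not correct as stated: uniqueness of the field embedding does \emph{not} ``follow from the centre-freeness of $G_L^m$.'' If $\tau_1, \tau_2 : L_m \hookrightarrow K_m$ both satisfy $\tau_i \sigma_m(g) = g\tau_i$, then $\tau_2^{-1}\tau_1$ (where defined) centralises $\sigma_m(G_K^m)$ inside $\mathrm{Aut}(L_m)$, but $\sigma_m$ need not be surjective, $\tau_2^{-1}\tau_1$ is not a priori an element of $G_L^m = \mathrm{Gal}(L_m/L)$, and triviality of the centre of $G_L^m$ alone does not force a centralising field automorphism to be the identity. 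Proposition~2.1 of the present paper handles uniqueness for Theorem~1.5 by a Frobenius-preservation argument (Corollary~2.8 of \cite{Ho3}) at every finite level, not by centre-freeness, and something of that nature would be needed here as well. By contrast, your observation that $\tau$ automatically restricts to $L \hookrightarrow K$ is correct: for $x \in L$ and $g \in G_K^m$ one has $g\cdot\tau(x) = \tau(\sigma_m(g)\cdot x) = \tau(x)$, so $\tau(x)$ is $G_K^m$-fixed.
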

In this paper, we prove the following $m$-step solvable version of Theorem 1.2.
\begin{thm}
Let $K$and $L$ be number fields. Let $m \geq 2$ be an integer, and let 
$$
\alpha_{m+3}: G_K^{m+3} \to G_L^{m+3}
$$
be an open homomorphism. We write 
$$
\alpha_m: G_K^m \to G_L^m
$$
for the open homomorphism induced by $\alpha_{m+3}$ (c.f. Proposition 2.2 in \cite{CS}). Then the followings are equivalent. \par 
(i) There exists a uniquely determined field embedding $\tilde \beta: L_m \hookrightarrow K_m$ satisfying $\tilde \beta \alpha_m(g) =  g \tilde\beta$ for all $g \in G_K^m$. \par 
(ii) The following diagram commutes
$$
\begin{tikzcd}
	{G_K^{m}} && {G_L^{m}} \\
	& {\widehat{\mathbb{Z}}^{\times}}
	\arrow[from=1-1, to=1-3]
	\arrow["{\chi_K}", from=1-1, to=2-2]
	\arrow["{\chi_L}"', from=1-3, to=2-2]
\end{tikzcd}
$$
where $\chi_K$ and $\chi_L$ are the cyclotomic characters.
\end{thm}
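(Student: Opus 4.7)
The overall plan is to reduce the non-trivial implication to Corato--Sa\"idi's Theorem 1.4, following the same general strategy as Hoshi's proof of Theorem 1.2 in \cite{Ho3}.

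The direction $(i)\Rightarrow(ii)$ is the formal fact that field embeddings preserve roots of unity. Given $\tilde\beta:L_m\hookrightarrow K_m$ with $\tilde\beta\alpha_m(g)=g\tilde\beta$ for all $g\in G_K^m$, applying both sides to a primitive $n$-th root of unity $\zeta_n\in L_m$, and using that $\tilde\beta(\zeta_n)\in K_m$ is again a primitive $n$-th root of unity, yields $\tilde\beta(\zeta_n)^{\chi_L(\alpha_m(g))}=\tilde\beta(\zeta_n)^{\chi_K(g)}$, whence $\chi_L\circ\alpha_m=\chi_K$.

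For the substantive direction $(ii)\Rightarrow(i)$, the plan is to take $\sigma_{m+3}:=\alpha_{m+3}$ in Theorem 1.4 and verify its two hypotheses: (a) that $\alpha_m$ restricts to an open injection on every $(\star_\ell)$-subgroup of $G_K^m$ for every prime $\ell$, and (b) that $(G_L^{m+3})^{[m-1]}\subset\alpha_{m+3}(G_K^{m+3})$. For (a), I would run a local-global argument of Uchida--Hoshi type: the $(\star_\ell)$-subgroups of $G_K^m$ from \cite{ST1} are characterised group-theoretically via decomposition data at primes above $\ell$, while the cyclotomic character restricted to a decomposition subgroup at $\mathfrak{p}\mid\ell$ detects the residue characteristic (via its action on prime-to-$\ell$ roots of unity) as well as the inertia (via the $\ell$-power roots). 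The commutativity $\chi_L\circ\alpha_m=\chi_K$ should thus match $(\star_\ell)$-subgroups on the $K$-side with their counterparts on the $L$-side, while the openness of $\alpha_m$ (which follows from that of $\alpha_{m+3}$ by Proposition 2.2 in \cite{CS}) propagates to openness on each such restriction. For (b), the open image $\alpha_{m+3}(G_K^{m+3})$ cuts out a finite subextension of $L_{m+3}/L$; the compatibility $\chi_L\circ\alpha_{m+3}=\chi_K$, obtained from (ii) by factoring $\chi_L$ through $G_L^1$, combined with the local information extracted in (a), should force this subextension to lie inside $L_{m-1}$, equivalently $(G_L^{m+3})^{[m-1]}\subset\alpha_{m+3}(G_K^{m+3})$. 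Theorem 1.4 would then produce the desired embedding $\tilde\beta$ and its uniqueness.

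The main obstacle I anticipate is step (a): extracting a full local decomposition-theoretic picture from the sole hypothesis of cyclotomic compatibility in the finite $m$-step setting. In Hoshi's solvably closed context the whole of $G^{\text{sol}}$ is available to probe local structure at each prime; here one has only $G_K^m$, and the argument must rely crucially on the three extra derived steps carried by $\alpha_{m+3}$, exactly as in \cite{ST1} and \cite{CS}, to recover the $(\star_\ell)$-subgroups and their images group-theoretically. Balancing this minimal-data local reconstruction against a cyclotomic compatibility that lives essentially at the abelian level is where the argument will be most delicate.
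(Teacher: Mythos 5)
Your forward direction $(i)\Rightarrow(ii)$ is fine (and is exactly the formal root-of-unity argument the paper dismisses as ``immediate''). The substantive direction is where the proposal departs from the paper, and unfortunately the departure does not work.

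Your plan for $(ii)\Rightarrow(i)$ is to feed $\alpha_{m+3}$ into Corato--Sa\"idi's Theorem~1.4 by verifying its two hypotheses. The second hypothesis, $(G_L^{m+3})^{[m-1]}\subset\alpha_{m+3}(G_K^{m+3})$, simply does \emph{not} follow from cyclotomic compatibility, no matter how much local information you extract. Here is a clean counterexample: take $m=2$, $L=\mathbb{Q}$, and $K=\mathbb{Q}(\sqrt[3]{2},\omega)$ (the splitting field of $x^3-2$, with $\mathrm{Gal}(K/\mathbb{Q})\cong S_3$), and let $\alpha_{m+3}$ be the restriction map $G_K^{m+3}\to G_{\mathbb{Q}}^{m+3}$ coming from $L_{m+3}\subset K_{m+3}$. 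Restriction always respects cyclotomic characters, so condition~(ii) holds. Moreover condition~(i) holds trivially with $\tilde\beta$ the inclusion $L_m\hookrightarrow K_m$. Yet $\alpha_{m+3}(G_K^{m+3})=\mathrm{Gal}(L_{m+3}/K)$, so the required inclusion $(G_L^{m+3})^{[m-1]}=\mathrm{Gal}(L_{m+3}/L_{m-1})\subset\mathrm{Gal}(L_{m+3}/K)$ is equivalent to $K\subset\mathbb{Q}^{\mathrm{ab}}$, which is false since $S_3$ is non-abelian. So Theorem~1.4 cannot be applied as a black box, and the hedged claim that (ii) ``should force'' this containment is wrong. (Your step (a) is also not established by the cyclotomic compatibility alone --- the kernel of the cyclotomic character on a decomposition group is large, so you cannot deduce injectivity of $\alpha_m$ on $(\star_\ell)$-subgroups from $\chi_L\circ\alpha_m=\chi_K$ without additional input, but step (b) is the decisive failure.)

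The paper does not go through Theorem~1.4 at all. Instead it runs a Hoshi-style argument directly: fix a totally imaginary finite Galois subextension $F'/L$ of $L_m/L$, let $H'=\mathrm{Gal}(L_m/F')$, $H=\alpha_m^{-1}(H')$ with fixed field $F$, and use the three extra derived steps to get a map $G_F^3\to G_{F'}^3$ and hence a surjection $\varphi\colon A_2(F)\twoheadrightarrow A_2(F')$ on the $\mathbb{Z}/2\mathbb{Z}$-quotients of the abelianisations, equivariant for the induced map $\beta\colon\mathrm{Gal}(F/K)\to\mathrm{Gal}(F'/L)$. The key input is Lemma~3.5, which uses the cyclotomic compatibility together with the $2$-torsion-freeness of $G_{F'}^{m+1}$ (since $F'$ is totally imaginary) and the local theory of \cite{ST1}, \cite{CS} to show that $\varphi$ is \emph{Frobenius-preserving}. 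One then applies Corollary~2.8 of \cite{Ho3} to produce a unique embedding $\phi\colon F'\hookrightarrow F$, checks $\beta$-equivariance of $\phi$, and passes to the inverse limit over all such $F'$. You would need to replace your appeal to Theorem~1.4 with a local Frobenius-preservation argument of this kind (or prove the analogue of Lemma~3.5 independently) to close the gap.
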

\section{The Proof of Uniqueness in Theorem 1.5}
In this section, we prove that in the implication (ii) $\implies$ (i) in Theorem 1.5, if such $\tilde \beta$ exists, then it is unique. Throughout this section we use the same notations as in Theorem 1.5 and assume that the assertion (ii) in Theorem 1.5 holds true. \par 
\begin{prop}
If a field embedding $\tilde \beta:L_m \hookrightarrow K_m$ as in Theorem 1.5 (i) exists then it is unique.
\end{prop}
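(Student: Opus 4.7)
The plan is to show that any two field embeddings $\tilde\beta_1, \tilde\beta_2 : L_m \hookrightarrow K_m$ satisfying the intertwining $\tilde\beta_i\, \alpha_m(g) = g\, \tilde\beta_i$ for all $g \in G_K^m$ must coincide, by reducing the comparison to a centre-freeness statement in the spirit of the uniqueness portion of Theorem 1.3, valid in the range $m \geq 2$.

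First, I observe that the intertwining forces $\tilde\beta_i(L_m) \subset M := K_m^{\ker \alpha_m}$, since any $\delta \in \ker\alpha_m$ satisfies $\delta(\tilde\beta_i(x)) = \tilde\beta_i(\alpha_m(\delta)(x)) = \tilde\beta_i(x)$ for all $x \in L_m$. Consequently the $G_K^m$-action on $\tilde\beta_i(L_m)$ descends to a faithful action of the open subgroup $H := \alpha_m(G_K^m) \subset G_L^m$, and $M/K$ is Galois with $\operatorname{Gal}(M/K) \cong H$. Setting $L' := L_m^H$, each $\tilde\beta_i$ restricts to a field embedding $\iota_i : L' \hookrightarrow K$, and a degree count using the faithfulness of the $H$-action yields $M = K \cdot \tilde\beta_i(L_m)$ together with $\tilde\beta_i(L_m) \cap K = \iota_i(L')$, so $K$ and $\tilde\beta_i(L_m)$ are linearly disjoint over $\iota_i(L')$ inside $M$.

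The decisive step is to establish $\iota_1 = \iota_2$ as embeddings $L' \hookrightarrow K$. Here hypothesis (ii) should enter: the commutativity of the cyclotomic diagram, combined with Chebotarev density applied to Frobenius elements at unramified primes and the uniqueness part of Theorem 1.3 applied to the isomorphism $G_K^m/\ker\alpha_m \xrightarrow{\sim} H$ naturally induced by $\alpha_m$, should rigidify the base-field embedding sufficiently to force $\iota_1 = \iota_2$. Once this is in hand, the composition $\phi := \tilde\beta_2 \circ \tilde\beta_1^{-1}$ is an $H$-equivariant, $\iota(L')$-linear field isomorphism $\tilde\beta_1(L_m) \xrightarrow{\sim} \tilde\beta_2(L_m)$ (with $\iota := \iota_1 = \iota_2$), and by linear disjointness it extends uniquely to a $K$-automorphism $\Phi$ of $M$, i.e., to an element of $\operatorname{Gal}(M/K) \cong H$.

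The $H$-equivariance of $\phi$ then translates directly into $\Phi \in Z(H)$. For $m \geq 2$ this centre is trivial --- this mirrors the centre-freeness invoked in the uniqueness part of Theorem 1.3, provable by a Chebotarev-density argument applied to Frobenius elements at sufficiently many unramified primes together with the non-abelian structure of $m$-step solvable Galois groups of number fields. Hence $\Phi = \operatorname{id}$, so $\phi = \operatorname{id}$ and $\tilde\beta_1 = \tilde\beta_2$. The main obstacle I foresee is the middle step, establishing $\iota_1 = \iota_2$: the intertwining alone does not distinguish among the possibly several embeddings $L' \hookrightarrow K$, and rigidifying the base-field embedding requires a careful interplay of (ii) with Chebotarev density and the isomorphism-version uniqueness in Theorem 1.3 before the centre-freeness argument can be applied.
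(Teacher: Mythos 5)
Your argument contains a genuine gap, and you flag it yourself: the step $\iota_1 = \iota_2$ is not proved but only gestured at. The references you invoke do not apply in the form stated. Theorem 1.3's uniqueness clause concerns isomorphisms $G_L^m \xrightarrow{\sim} G_K^m$ between \emph{maximal $m$-step solvable quotients of number field Galois groups}, but $G_K^m/\ker\alpha_m$ and $H = \alpha_m(G_K^m)$ are not of this form: $H$ is an open subgroup of $G_L^m$ (hence roughly of the shape $G_{L'}/(G_{L'}\cap G_L^{[m]})$, which is not $G_{L'}^m$), and $G_K^m/\ker\alpha_m$ is likewise a proper quotient of $G_K^m$. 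So the ``isomorphism-version uniqueness'' cannot be quoted to rigidify $\iota$. Moreover the intertwining identity alone gives no direct control over $\iota_1,\iota_2$ — distinguishing between conjugate embeddings $L'\hookrightarrow K$ is precisely where the cyclotomic-character hypothesis must do real work, and the proposal does not explain how. A second, smaller issue: your final step requires $Z(H)=1$ for the \emph{open subgroup} $H\subset G_L^m$, not merely for $G_L^m$ itself. Centre-freeness of $G_L^m$ for $m\geq 2$ is available from \cite{ST1}, but triviality of $Z(H)$ for arbitrary open $H$ (i.e.\ slimness of $G_L^m$) is an additional claim that you assert without justification.

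The paper's proof is structurally different and bypasses both issues. It fixes a finite Galois subextension $L_0/L$ inside $L_m$, observes that $\tilde\beta|_{L_0}$ lands in some finite Galois $K_0/K$, and then exploits the three extra derived layers to produce the induced map $G_{K_0}^{\mathrm{ab}}\otimes\mathbb{Z}/\ell\mathbb{Z} \to G_{L_0}^{\mathrm{ab}}\otimes\mathbb{Z}/\ell\mathbb{Z}$, which is Frobenius-preserving since it arises from a field embedding. Hoshi's Corollary 2.8 (\cite{Ho3}) then says this map determines a \emph{unique} field embedding $L_0\hookrightarrow K_0$, forcing $\tilde\beta|_{L_0}=\tilde\beta'|_{L_0}$ for any second candidate $\tilde\beta'$, and one concludes by passing to the inverse limit over $L_0$. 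This shifts all rigidity to a single local-global uniqueness input already available in \cite{Ho3}, avoiding any linear-disjointness decomposition, any need for $\iota_1=\iota_2$ as a separate step, and any appeal to centre-triviality of open subgroups.
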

\begin{proof}
Let $L_0/L$ be a finite Galois extension of $L$ contained in $L_m$. Then $\tilde \beta$ restricts to a field embedding $\tilde \beta|_{L_0}: L_0 \to K_0$ for some finite Galois extension $K_0/K$ contained in $K_m$. We have the following commutative diagram:
$$
\begin{tikzcd}
	{G_K^m} & {\text{Gal}(K_0/K)} \\
	{G_L^m} & {\text{Gal}(L_0/L)}
	\arrow[from=1-1, to=1-2]
	\arrow["{\alpha_m}", from=1-1, to=2-1]
	\arrow["{\eta}",from=1-2, to=2-2]
	\arrow[ from=2-1, to=2-2]
\end{tikzcd}
$$
where the map $\eta$ is induced by $\alpha_m$. Moreover, $\eta$ arises naturally from the following commutative diagram:
$$
\begin{tikzcd}
	{L_0} & {K_0} \\
	L & K
	\arrow["{\tilde \beta|_{L_0}}", from=1-1, to=1-2]
	\arrow[from=2-1, to=1-1]
	\arrow[from=2-1, to=2-2]
	\arrow[from=2-2, to=1-2]
\end{tikzcd}
$$
where all arrows are field embeddings. Write $\Gamma_{K_0}$ (resp. $\Gamma_{L_0}$) for the kernel of $G_K^m \twoheadrightarrow \text{Gal}(K_0/K)$ [resp. $\text{ker}(G_L^m \twoheadrightarrow \text{Gal}(L_0/L))$]. Moreover, we write $\widetilde{\Gamma}_{K_0}$ (resp. $\widetilde{\Gamma}_{L_0}$) for the inverse images of $\Gamma_{K_0}$ (resp. $\Gamma_{L_0}$) in $G_K^{m+3}$ (resp. in $G_L^{m+3}$). Thus, $\alpha_m$ determines an open homomorphism
$$
\gamma:G_{K_0}^3 \to G_{L_0}^3
$$
where $\widetilde{\Gamma}_{K_0}^3 \cong G_{K_0}^3$ (resp. $\widetilde{\Gamma}_{L_0}^3 \cong G_{L_0}^3$). \par 
Since $\gamma$ arises from $\tilde\beta|_{L_0}$, the induced map
$$
\gamma_1: G_{K_0}^{\text{ab}} \to G_{L_0}^{\text{ab}}
$$
also arises from $\tilde\beta|_{L_0}$. In particular, for a given prime number $\ell$, the induced map
$$
\tilde \gamma_1 : G_{K_0}^{\text{ab}} \otimes_{\widehat{\mathbb Z}} \mathbb Z/\ell \mathbb Z \to G_{L_0}^{\text{ab}} \otimes_{\widehat{\mathbb Z}} \mathbb Z/\ell \mathbb Z
$$
is Frobenius-preserving (c.f. Definition 2.7 in \cite{Ho3}). It follows from Corollary 2.8 in \cite{Ho3} that $\tilde \gamma_1$ determines a unique field embedding $\gamma_0:L_0 \hookrightarrow K_0$. Since $\tilde \gamma_1$ arises from $\tilde \beta|_{L_0}$, we can conclude that $\gamma_0 = \tilde \beta|_{L_0}$. \par 
Consider another field embedding $\tilde \beta': L_m \hookrightarrow K_m$. We have $\tilde \beta'|_{L_0}: L_0 \to K_0'$ for some finite Galois extension $K_0'$ of $K$ contained in $K_m$. In particular, $\tilde \beta'|_{L_0}: L_0 \to K_0K_0'$ and $\tilde \beta|_{L_0}: L_0 \to K_0K_0'$ coincide by repeating the above arguments. Thus, by composing with the natural inclusion $K_0K_0' \to K_m$, we can conclude that $\tilde \beta|_{L_0}: L_0 \to K_m$ and $\tilde \beta'|_{L_0}: L_0 \to K_m$ concide. Therefore,
$$
\tilde \beta' =\varprojlim_{L_0}~\tilde \beta'|_{L_0} =\varprojlim_{L_0} ~\tilde \beta|_{L_0} = \tilde \beta 
$$
i.e. $\tilde \beta$ is unique.
\end{proof}
\section{The Proof of Existence in Theorem 1.5}
We first introduce a few definitions. 
\begin{defn}[c.f. Definition 2.1 in \cite{Ho3}]
Let $K$ be a number field and let $\ell$ be a prime number. We write:
$$
A_{\ell}(K) := G_K^{\text{ab}} \otimes_{\widehat{\mathbb{Z}}} \mathbb{Z}/\ell\mathbb{Z}.
$$
\end{defn}
\begin{defn}[c.f. Definition 1.20 in \cite{ST1}]
Let $K$ be a number field and let $m \geq 2$ be an integer. Let $H \subset G_K^m$ be a closed subgroup and let $\ell$ be a prime number. We say that $H$ satisfies the condition $(\star_{\ell})$ if the followings hold. \par 
(i) $H$ fits into an exact sequence
$$
1 \to F_1 \to H \to F_2 \to 1
$$
where $F_1,F_2$ are free pro-$\ell$ groups of rank $1$, i.e. $F_1$ and $F_2$ are both isomorphic to $\mathbb Z_{\ell}$. \par 
(ii) $\mathcal{H}^2(H,\mathbb{F}_{\ell}) \neq 0$ [see Lemma 1.16 in \cite{ST1} for the definition of $\mathcal{H}^2(H,\mathbb{F}_{\ell})$].
\end{defn}
\begin{lem}[Compare with Lemma 3.1 in \cite{Ho3}]
Let $K$ be a number field and let $\mathfrak{p} \in \mathscr{P}_K^{\text{fin}}$. We write $p := \text{char}(\mathfrak{p})$ for the residue characteristic of $\mathfrak{p}$. Let $m \geq 1$ be an integer. We write $\chi: D_{\mathfrak{p}}^m \to \widehat{\mathbb{Z}}^{\times}$ for the cyclotomic character of $D_{\mathfrak{p}}^m$. Then the followings hold. \par 
(i) If $d_{\mathfrak{p}} = 1$, and $V \subset D_{\mathfrak{p}}^m$ is a pro-$2$ Sylow subgroup of the inertia subgroup of $D_{\mathfrak{p}}^m$, then $\chi(V) \neq 1$. \par 
(ii) Let $\ell$ be a prime number. Then $\ell = p$ if and only if $\chi(D_{\mathfrak{p}}^m)$ contains a closed subgroup isomorphic to $\mathbb{Z}_{\ell} \times \mathbb{Z}_{\ell}$. \par 
(iii) $f_{\mathfrak{p}} = 1$ if and only if the image of the composite
$$
D_{\mathfrak{p}}^m \xrightarrow{\chi} \widehat{\mathbb{Z}}^{\times} \twoheadrightarrow \prod_{\ell \neq p} \mathbb{Z}^{\times}_{\ell}
$$
contains the image of $p$ in $\prod_{\ell \neq p} \mathbb Z_{\ell}^{\times}$.\par 
(iv) If $p > 2$ and $f_{\mathfrak{p}} = 1$. Then the subset of $D_{\mathfrak{p}}^{\text{ab}} \otimes_{\widehat{\mathbb{Z}}} \mathbb{Z}/2\mathbb{Z}$ consisting of liftings of $\text{Frob}_{\mathfrak{p}}$ (i.e. the set $\text{FL}_{2}(\mathfrak{p})$, c.f. Definition 2.1 in \cite{Ho3}) coincides with the subset of $D_{\mathfrak{p}}^{\text{ab}} \otimes_{\widehat{\mathbb{Z}}} \mathbb{Z}/2\mathbb{Z}$ consisting  of the elements whose images under the map
$$
D_{\mathfrak{p}}^{\text{ab}} \otimes_{\widehat{\mathbb{Z}}} \mathbb{Z}/2\mathbb{Z} \to (\prod_{\ell \neq p}\mathbb{Z}_{\ell}^{\times}) \otimes_{\widehat{\mathbb{Z}}} \mathbb{Z}/2\mathbb{Z}
$$
are non-trivial. Here the map $D_{\mathfrak{p}}^m \to \prod_{\ell \neq p} \mathbb Z_{\ell}^{\times}$ is induced by the cyclotomic character.
\end{lem}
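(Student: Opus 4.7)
The plan is to handle the four parts in sequence, all via an explicit description of $\chi|_{D_{\mathfrak{p}}}$ coming from local class field theory. The key preliminary observation is that $\chi$ factors through $G_K^{\text{ab}} = G_K^1$ and hence through every $G_K^m$ with $m \geq 1$; consequently $\chi(D_{\mathfrak{p}}^m) = \chi(D_{\mathfrak{p}})$, and everything can be computed at the level of $D_{\mathfrak{p}}^{\text{ab}}$, which by local reciprocity is identified with the profinite completion of $K_{\mathfrak{p}}^{\times}$. Under this identification, for $\ell \neq p$ the component $\chi_\ell$ is unramified and sends a Frobenius lift to $p^{f_{\mathfrak{p}}} \in \mathbb{Z}_\ell^{\times}$ and the units $O_{K_{\mathfrak{p}}}^{\times}$ to $1$, while $\chi_p$ is ramified with $\chi_p(I_{\mathfrak{p}})$ an open subgroup of $\mathbb{Z}_p^{\times}$ (since $K_{\mathfrak{p}}(\mu_{p^{\infty}})/K_{\mathfrak{p}}$ is totally ramified).

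For \emph{(i)}, since $d_{\mathfrak{p}}=1$ we have $K_{\mathfrak{p}} = \mathbb{Q}_p$, and I would split on the parity of $p$. If $p=2$, a pro-$2$ Sylow $V$ of inertia contains the wild inertia, whose image under $\chi_2$ is open in $\mathbb{Z}_2^{\times}$; in particular $\chi(V) \neq 1$. If $p$ is odd, $V$ lies in the tame quotient of inertia, and the composite with the tame reduction of $\chi_p$ lands in $\mathbb{F}_p^{\times}$ and surjects onto its pro-$2$ part, which is non-trivial because $p-1$ is even.

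For \emph{(ii)} and \emph{(iii)} I would analyse $\chi(D_{\mathfrak{p}}^m) \subset \widehat{\mathbb{Z}}^{\times}$ as topologically generated by two pieces: the inertia image, living entirely in the $p$-slot as an open subgroup of $\mathbb{Z}_p^{\times}$ (contributing one copy of $\mathbb{Z}_p$ to the pro-$p$ part), and the pro-cyclic closure of $\chi(\text{Frob})$, whose $\ell$-component in each $\mathbb{Z}_\ell^{\times}$ is $p^{f_{\mathfrak{p}}}$. For \emph{(ii)}, when $\ell=p$ the iterate $\chi(\text{Frob})^n$ for $n \in \mathbb{Z}_p \subset \widehat{\mathbb{Z}}$ has Teichm\"uller component $\omega(p^{f_{\mathfrak{p}}})^n$ in each $\mu_{\ell'-1} \subset \mathbb{Z}_{\ell'}^{\times}$ for $\ell' \neq p$; using Dirichlet's theorem to produce primes $\ell' \equiv 1 \pmod{p^k}$ for every $k$ at which $p^{f_{\mathfrak{p}}}$ has $p$-power order $\geq p^k$, one obtains a second free pro-$p$ factor of rank $1$, independent of the inertia $\mathbb{Z}_p$ (since the two live in disjoint slots of $\widehat{\mathbb{Z}}^{\times}$); hence $\chi(D_{\mathfrak{p}}^m) \supset \mathbb{Z}_p \times \mathbb{Z}_p$. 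When $\ell \neq p$, the inertia contributes only finite $\ell$-torsion (from the $\mu_{p-1}$-part of $\mathbb{Z}_p^{\times}$), and the Frobenius contributes only a single $\mathbb{Z}_\ell$-factor in the $\mathbb{Z}_\ell^{\times}$-slot, so $\mathbb{Z}_\ell \times \mathbb{Z}_\ell$ cannot embed. For \emph{(iii)}, the projection to $\prod_{\ell \neq p} \mathbb{Z}_\ell^{\times}$ kills inertia, so the image is the pro-cyclic closure of $(p^{f_{\mathfrak{p}}}, p^{f_{\mathfrak{p}}}, \ldots)$; using the same Dirichlet input to show $\bigcap_{\ell \neq p} \ker(\widehat{\mathbb{Z}} \to \mathbb{Z}_\ell^{\times}, \, n \mapsto p^n) = 0$, the condition that $(p, p, \ldots)$ lies in this closure becomes $n f_{\mathfrak{p}} = 1$ in $\widehat{\mathbb{Z}}$ for some $n$, forcing $f_{\mathfrak{p}} = 1$.

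Finally for \emph{(iv)}, I would combine \emph{(iii)} (applied with $f_{\mathfrak{p}}=1$) with a mod-$2$ computation: the image of Frobenius in $\prod_{\ell \neq p} \mathbb{Z}_\ell^{\times}$ is exactly $p$ while the image of inertia is $0$, and after tensoring with $\mathbb{Z}/2$ the class of $p$ remains non-trivial (since $p > 2$ implies $p$ is a non-square modulo infinitely many primes $\ell$, e.g.\ by quadratic reciprocity), while inertia still maps to $0$. Hence $\text{FL}_2(\mathfrak{p})$ coincides with the set of elements in $D_{\mathfrak{p}}^{\text{ab}} \otimes \mathbb{Z}/2$ whose image in $(\prod_{\ell \neq p} \mathbb{Z}_\ell^{\times}) \otimes \mathbb{Z}/2$ is non-trivial. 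The main obstacle I anticipate is \emph{(ii)}: producing the second independent $\mathbb{Z}_p$-factor inside $\chi(D_{\mathfrak{p}}^m)$ requires a genuine Dirichlet density input, and one must take care to realise it in the torsion parts of the $\mathbb{Z}_{\ell'}^{\times}$-slots for $\ell' \neq p$, keeping it disjoint from the inertia $\mathbb{Z}_p$ living in the $\mathbb{Z}_p^{\times}$-slot.
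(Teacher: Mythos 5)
The paper's own proof is a one-liner: since $\chi$ factors through $D_{\mathfrak{p}}^m$ (indeed through $D_{\mathfrak{p}}^{1} = D_{\mathfrak{p}}^{\mathrm{ab}}$), the image $\chi(D_{\mathfrak{p}}^m)$ coincides with $\chi(D_{\mathfrak{p}})$ and all four assertions reduce to Lemma~3.1 of Hoshi's paper. Your proposal makes exactly the same initial observation but then, instead of citing Hoshi, reproves his local statements from scratch via local class field theory. The route is genuinely different in that sense: the paper buys brevity by outsourcing the local analysis, while your version is self-contained and makes visible the structure of $\chi(D_{\mathfrak{p}})$ as generated by an open subgroup of the $p$-slot $\mathbb{Z}_p^{\times}$ (inertia) together with the procyclic closure of $\chi(\mathrm{Frob})$. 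The arguments for~(i), (iii), and~(iv) are sound; the decomposition of $\chi(D_{\mathfrak{p}})$, the injectivity of $n \mapsto p^n$ from $\widehat{\mathbb{Z}}$ into $\prod_{\ell\neq p}\mathbb{Z}_\ell^{\times}$, and the quadratic-nonresidue argument for~(iv) are all correct.

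One imprecision worth fixing: in~(ii) (and by reference in~(iii)) you invoke ``Dirichlet's theorem'' to produce primes $\ell'$ with $\ell' \equiv 1 \pmod{p^k}$ \emph{and} with $p^{f_{\mathfrak{p}}}$ of $p$-power order at least $p^k$ in $\mathbb{F}_{\ell'}^{\times}$. Dirichlet gives only the congruence condition on $\ell'$, not the order condition on $p^{f_{\mathfrak{p}}}$; those two are independent. The standard fix is either a Chebotarev argument in the Kummer extension $\mathbb{Q}(\zeta_{p^k},(p^{f_{\mathfrak{p}}})^{1/p^k})/\mathbb{Q}$, or the elementary observation that for large $k$ the cyclotomic value $\Phi_{p^k}(p^{f_{\mathfrak{p}}})$ has a prime factor $\ell' \neq p$, and any such $\ell'$ automatically satisfies $\ell'\equiv 1\pmod{p^k}$ with $p^{f_{\mathfrak{p}}}$ of exact order $p^k$ modulo $\ell'$. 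With that density input stated correctly, the rest of~(ii) (the $\mathbb{Z}_\ell\times\mathbb{Z}_\ell$ obstruction for $\ell\neq p$ via the procyclic $\ell$-Sylow plus finite $\ell$-torsion from $\mu_{p-1}$) goes through as you outline.
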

\begin{proof}
Since the cyclotomic character $\chi: D_{\mathfrak{p}} \to \widehat{\mathbb Z}^{\times}$ factors through $D_{\mathfrak{p}}^m$ for any $m \geq 1$, the proofs of assertions (i), (ii) and (iii) are induced by assertions (i),(ii) and (iii) in Lemma 3.1 in \cite{Ho3}. The proof of assertion (iv) is identical to the proof of asseriton (iv) in Lemma 3.1 in \cite{Ho3}.
\end{proof}
\begin{lem}
Let $m \geq 2$ be an integer and let $K$ be a number field. Then $G_K^m$ has no non-trivial $2$-torsion if and only if $K$ is totally imaginary.
\end{lem}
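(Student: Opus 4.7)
The plan splits into the two directions of the biconditional.

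For ``$K$ not totally imaginary $\implies$ $G_K^m$ has non-trivial $2$-torsion'', I would pick a real place $v$ of $K$ and let $c \in G_K$ be a complex conjugation at $v$, so $c^2 = 1$. The cyclotomic character $\chi_K \colon G_K \to \widehat{\mathbb{Z}}^{\times}$ factors through $G_K^{\mathrm{ab}} = G_K^1$, and hence through $G_K^m$ for every $m \geq 1$; since complex conjugation inverts all roots of unity, $\chi_K(c) = -1 \neq 1$, so the image of $c$ in $G_K^m$ is a non-trivial element squaring to the identity. This argument actually works for all $m \geq 1$.

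For ``$K$ totally imaginary $\implies$ $G_K^m$ has no non-trivial $2$-torsion'' (with $m \geq 2$), the first step is to reduce to the abelianisation. Since $K_1 \supseteq K(\mu_\infty)$, every $K_i$ for $i \geq 1$ contains all roots of unity, and by Kummer theory $G_{K_i}^{\mathrm{ab}} \cong \mathrm{Hom}_{\mathrm{cts}}(K_i^{\times}, \widehat{\mathbb{Z}}(1))$, which is torsion-free because its target $\widehat{\mathbb{Z}}(1)$ is. The graded pieces $G_K^{[i]}/G_K^{[i+1]} \cong G_{K_i}^{\mathrm{ab}}$ for $1 \leq i \leq m-1$ are thus all torsion-free, and since an extension of torsion-free profinite groups is torsion-free, an easy induction on $i$ shows that the full kernel $G_K^{[1]}/G_K^{[m]}$ of the projection $G_K^m \twoheadrightarrow G_K^{\mathrm{ab}}$ is torsion-free. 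Any non-trivial $2$-torsion $g \in G_K^m$ therefore projects to a non-trivial order-$2$ element $\bar g \in G_K^{\mathrm{ab}}$.

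The main step is then to show that for $K$ totally imaginary, no such $\bar g$ admits an order-$2$ lift in $G_K^m$. My plan is to successively lift $g$ through $G_K^{m+1}, G_K^{m+2}, \dots$ to an order-$2$ element of $G_K^{\mathrm{sol}} = \varprojlim_n G_K^n$, and thereby (by closedness of the set of order-$2$ elements in the compact group $G_K$) to an order-$2$ element $\tilde g \in G_K$; by the Artin--Schreier theorem, any such $\tilde g$ must be a complex conjugation at a real place of $K$, contradicting total imaginarity. The obstruction to lifting order-$2$ at each stage $G_K^n \to G_K^{n+1}$ lies in $H^2(\langle \bar g \rangle,\, G_{K_n}^{\mathrm{ab}})$, and the main technical obstacle is verifying that this obstruction vanishes at every stage. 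I expect this to follow from a norm-surjectivity computation on the $\mathbb{Z}[\langle \bar g \rangle]$-module $G_{K_n}^{\mathrm{ab}}$ using its Kummer-dual description, exploiting the fact that for $K$ totally imaginary any order-$2$ $\bar g \in G_K^{\mathrm{ab}}$ satisfies $\chi_K(\bar g) = 1$ (otherwise $\chi_K$ would contain $-1$, which corresponds to a complex conjugation fixing $K$ pointwise, i.e., a real place of $K$); this trivialises the $\langle \bar g \rangle$-action on $\widehat{\mathbb{Z}}(1)$ and makes the Tate-cohomology calculation tractable.
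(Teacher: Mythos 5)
Your forward direction (real place $\Rightarrow$ $2$-torsion via complex conjugation and the cyclotomic character) is correct, and the reduction of the converse to the abelianisation — showing $\ker(G_K^m \twoheadrightarrow G_K^{\mathrm{ab}})$ is torsion-free because each $G_{K_i}^{\mathrm{ab}}\ (i\ge 1)$ is $\mathrm{Hom}_{\mathrm{cts}}(K_i^{\times},\widehat{\mathbb{Z}}(1))$ by Kummer theory — is also sound. However, the ``main step'' as written is a plan rather than a proof, and several of its key claims do not hold up. First, the assertion that $\chi_K(\bar g)=1$ ``otherwise $\chi_K$ would contain $-1$'' is not valid: an order-$2$ element of $\widehat{\mathbb{Z}}^{\times}=\prod_p\mathbb{Z}_p^{\times}$ is any tuple $(\varepsilon_p)_p$ with $\varepsilon_p\in\{\pm 1\}$, not only the diagonal $-1$; and it is only the diagonal $-1$ that one can try to relate to a real place (via complex conjugation inverting \emph{all} roots of unity). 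You have not ruled out $\chi_K(\bar g)$ being, say, $-1$ at a single prime. Second, the lifting-obstruction vanishing (that $\hat H^0(\langle g\rangle, G_{K_n}^{\mathrm{ab}})$ contains $\tilde g^2$) is only conjectured — you write ``I expect this to follow from a norm-surjectivity computation'' — and this is exactly the substantive content, not a routine check; moreover, you would need the $\langle g\rangle$-module structure on $G_{K_n}^{\mathrm{ab}}$, which is far richer than the twist by $\chi_K$, so trivialising the action on $\widehat{\mathbb{Z}}(1)$ alone does not make the Tate cohomology computation ``tractable''.

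Third, even granting compatible order-$2$ lifts to every $G_K^n$, and hence (by compactness of an inverse limit of nonempty closed fibres) an order-$2$ element of $G_K^{\mathrm{sol}}$, the passage to an order-$2$ element of $G_K$ is not a consequence of ``closedness of the set of order-$2$ elements in $G_K$''. Closedness of $\{x\in G_K: x^2=1\}$ is a statement \emph{inside} $G_K$; it says nothing about whether a given order-$2$ element of the quotient $G_K^{\mathrm{sol}}$ lifts to an order-$2$ element of $G_K$ (compare $\mathbb{Z}_2\twoheadrightarrow\mathbb{Z}/2$, where it visibly fails). Artin--Schreier classifies torsion in $G_K$, not in $G_K^{\mathrm{sol}}$, and bridging that gap is itself a nontrivial theorem. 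The paper's proof sidesteps all of this by citing Proposition~1.9 of Corato--Sa\"idi, which characterises the torsion of $G_K^m$ directly via the $m$-step solvable local theory of Sa\"idi--Tamagawa; you should either reproduce that input or supply complete arguments for the lifting and for the descent from $G_K^{\mathrm{sol}}$ to $G_K$.
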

\begin{proof}
This is an immediate consequence of Proposition 1.9 in \cite{CS}.
\end{proof}
\begin{lem}[Compare with Lemma 3.3 in \cite{Ho3}]
Let $m \geq 1$ be an integer and let $K,L$ be number fields. Let $\chi_K,\chi_L$ be the cyclotomic characters associated to $G_K^m$ and $G_L^m$, respectively. Let $\alpha_{m+2}: G_K^{m+2} \to G_L^{m+2}$ be an open homomorphism and $\alpha_m: G_K^m \to G_L^m$ the open homomorphism induced by $\alpha_{m+2}$ (c.f. Proposition 2.2 in \cite{CS}). Suppose that the following conditions hold. \par 
(1) The following diagram commutes
$$
\begin{tikzcd}
	{G_K^m} && {G_L^m} \\
	& {\widehat{\mathbb{Z}}^{\times}.}
	\arrow["{\alpha_m}", from=1-1, to=1-3]
	\arrow["{\chi_K}", from=1-1, to=2-2]
	\arrow["{\chi_L}"', from=1-3, to=2-2]
\end{tikzcd}
$$
(2) $L$ is totally imaginary (i.e. $L$ has no real embeddings). \par 
Then the followings hold. \par 
(i) Let $\mathfrak{p} \in \mathscr{P}_K^{\text{fin},f=1}$ be such that $p_{\mathfrak{p}} > 2$ and $d_{\mathfrak{p}} = 1$. Then there exists a unique $\mathfrak{q} \in \mathscr{P}_L^{\text{fin}}$ such that the following conditions are satisfied: \par 
(a) The image of $D_{\mathfrak{p}}^m \subset G_K^m$ under $\alpha_m$ is contained in $D_{\mathfrak{q}}^m$ ($D_{\mathfrak{p}}^m$ and $D_{\mathfrak{q}}^m$ are decomposition subgroups at $\mathfrak{p}$ and $\mathfrak{q}$, respectively, and are well-defined up to conjugation). \par 
(b) $\mathfrak{p}$ and $\mathfrak{q}$ have the same residue characteristic (i.e. $p_{\mathfrak{p}} = p_{\mathfrak{q}}$). \par
(c) $f_{\mathfrak{q}} = 1$. \par 
(ii) The map $\alpha_{m}$ induces a continuous map of $\widehat{\mathbb{Z}}$-modules
$$
\alpha^{\text{ab}}: G_K^{\text{ab}} \otimes_{\widehat{\mathbb{Z}}} \mathbb{Z}/2\mathbb{Z} \to G_L^{\text{ab}} \otimes_{\widehat{\mathbb{Z}}} \mathbb{Z}/2\mathbb{Z}
$$
which is Frobenius-preserving (c.f Definition 2.7 in \cite{Ho3}).
\end{lem}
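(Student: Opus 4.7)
The plan is to follow the template of Lemma 3.3 in \cite{Ho3}, adapted to the $m$-step solvable setting, using the $(\star_{\ell})$-subgroup machinery of \cite{ST1} to transport decomposition data through $\alpha_{m+2}$ and then reading off the desired compatibilities from Lemma 3.3 above. The totally imaginary hypothesis on $L$ enters only through Lemma 3.4, which guarantees the absence of non-trivial $2$-torsion in $G_L^m$ needed for the mod-$2$ arguments.

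For (i), I would establish existence of $\mathfrak{q}$ first. Fix $\mathfrak{p}$ as in the statement and set $\ell := p_{\mathfrak{p}}$. Since $\alpha_{m+2}$ is an open homomorphism of $(m{+}2)$-step solvable quotients, the Sa\"idi--Tamagawa characterisation of decomposition groups via $(\star_{\ell})$-subgroups (Definition 3.2 above, after \cite{ST1}) forces $\alpha_{m+2}$ to carry a decomposition group $\widetilde{D}_{\mathfrak{p}}^{m+2}$ at $\mathfrak{p}$ into a decomposition group $\widetilde{D}_{\mathfrak{q}}^{m+2}$ of $G_L^{m+2}$ at some $\mathfrak{q} \in \mathscr{P}_L^{\text{fin}}$; descending to level $m$ yields (a). For (b), Lemma 3.3(ii) applied to $\mathfrak{p}$ says that $\chi_K(D_{\mathfrak{p}}^m)$ contains a closed subgroup isomorphic to $\mathbb{Z}_{\ell} \times \mathbb{Z}_{\ell}$. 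Condition (1) gives $\chi_L \circ \alpha_m = \chi_K$, so $\chi_L(D_{\mathfrak{q}}^m)$ also contains such a subgroup, and Lemma 3.3(ii) read in the opposite direction then forces $p_{\mathfrak{q}} = \ell$. For (c), Lemma 3.3(iii) applied to $\mathfrak{p}$ (using $f_{\mathfrak{p}}=1$) places the image of $\ell$ in $\prod_{\ell' \neq \ell} \mathbb{Z}_{\ell'}^{\times}$ inside the projection of $\chi_K(D_{\mathfrak{p}}^m)$; condition (1) transports this into the projection of $\chi_L(D_{\mathfrak{q}}^m)$, and a second application of Lemma 3.3(iii) yields $f_{\mathfrak{q}} = 1$. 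Uniqueness of $\mathfrak{q}$ then follows because $\alpha_m(D_{\mathfrak{p}}^m)$ is open in its image (as $\alpha_m$ is open) while distinct decomposition groups in the $m$-step tower cannot have a common open subgroup, by the separation properties developed in \cite{ST1}.

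For (ii), I would apply (i) prime-by-prime. Fix $\mathfrak{p} \in \mathscr{P}_K^{\text{fin},f=1}$ with $p_{\mathfrak{p}} > 2$, and let $\mathfrak{q}$ be the unique prime of $L$ produced by (i). Since $\alpha_m(D_{\mathfrak{p}}^m) \subset D_{\mathfrak{q}}^m$, passing to abelianisations and reducing mod $2$ yields a map $D_{\mathfrak{p}}^{\text{ab}} \otimes_{\widehat{\mathbb{Z}}} \mathbb{Z}/2\mathbb{Z} \to D_{\mathfrak{q}}^{\text{ab}} \otimes_{\widehat{\mathbb{Z}}} \mathbb{Z}/2\mathbb{Z}$ that fits into a commutative square with $\alpha^{\text{ab}}$, and (by condition (1)) commutes with the cyclotomic projections to $(\prod_{\ell' \neq p_{\mathfrak{p}}} \mathbb{Z}_{\ell'}^{\times}) \otimes_{\widehat{\mathbb{Z}}} \mathbb{Z}/2\mathbb{Z}$ on both sides. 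Lemma 3.3(iv) then applies at both $\mathfrak{p}$ and $\mathfrak{q}$ (its hypotheses hold because $p_{\mathfrak{q}} = p_{\mathfrak{p}} > 2$ and $f_{\mathfrak{p}} = f_{\mathfrak{q}} = 1$), identifying $\text{FL}_2(\mathfrak{p})$ and $\text{FL}_2(\mathfrak{q})$ intrinsically as the subsets of elements with non-trivial cyclotomic image. The observed compatibility immediately yields $\alpha^{\text{ab}}(\text{FL}_2(\mathfrak{p})) \subset \text{FL}_2(\mathfrak{q})$, which is precisely the Frobenius-preserving condition.

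The hard part will be the very first step of (i): establishing that $\alpha_m$ sends decomposition groups into decomposition groups. In the fully prosolvable setting of \cite{Ho3} this is classical, but in the $m$-step truncation one cannot read it off $\alpha_m$ alone. It is precisely in order to have enough \emph{room above} $m$ for the $(\star_{\ell})$-subgroup classification of \cite{ST1} to detect decomposition groups intrinsically that the hypothesis is placed on $\alpha_{m+2}$ rather than merely $\alpha_m$. Once that is secured, the rest is careful bookkeeping, with the prime $2$ being the only delicate point---which is why $L$ is assumed totally imaginary, via Lemma 3.4.
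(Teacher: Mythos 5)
Your high-level plan is right, and your treatment of (i)(b), (i)(c), and (ii) matches the paper's logic (the paper handles (ii) by simply invoking the proof of Lemma 3.3(ii) in \cite{Ho3}, which is exactly the prime-by-prime argument you sketch via Lemma 3.3(iv)). But the first step of (i), which you yourself flag as ``the hard part,'' contains a concrete error and a genuine gap, and this is precisely the step where the paper does real work.

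First, the error: you set $\ell := p_{\mathfrak{p}}$ and propose to use $(\star_{\ell})$-subgroups for that prime. The paper instead uses $\ell = 2$, i.e., takes a pro-$2$ Sylow subgroup $V \subset D_{\mathfrak{p}}^{m+1}$ of the decomposition group, which has the required structure $1 \to \mathbb{Z}_2 \to V \to \mathbb{Z}_2 \to 1$ exactly because $2 \neq p_{\mathfrak{p}}$ (this is Proposition 1.1(vi) of \cite{ST1}, tame inertia plus Frobenius). With $\ell = p_{\mathfrak{p}}$ one is in the wildly ramified case, the extension structure is different, and the $(\star_{\ell})$ machinery does not apply in the same way. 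The hypotheses $p_{\mathfrak{p}} > 2$ and $d_{\mathfrak{p}} = 1$ are there precisely to make the $\ell = 2$ argument work, not to single out $\ell = p_{\mathfrak{p}}$.

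Second, the gap: having a $(\star_2)$-subgroup $V$ in $G_K^{m+1}$ does not automatically give a $(\star_2)$-subgroup in $G_L^{m+1}$; one must show that $\alpha_{m+1}$ restricts to an \emph{injection} on $V$ so that its image is again a $(\star_2)$-subgroup (Proposition 2.5 in \cite{CS}). This is where all the hypotheses come together: $L$ totally imaginary $\Rightarrow$ $G_L^{m+1}$ has no $2$-torsion (Lemma 3.4); $d_{\mathfrak{p}} = 1$ gives, via Lemma 3.3(i), that $\chi_K(V^*) \neq 1$ for $V^* := V \cap I_{\mathfrak{p}}^{m+1}$; and condition (1) then forces $\alpha_{m+1}(V^*) \neq 1$. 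Combined with the absence of $2$-torsion in the target (and the discussion on p.~11 of \cite{CS}), this yields the injectivity on $V$. Once $V_0 := \alpha_{m+1}(V)$ is a $(\star_2)$-subgroup, Proposition 1.22 of \cite{ST1} produces the unique $\mathfrak{q}$, and Proposition 3.10 of \cite{CS} upgrades $V_0 \subset D_{\mathfrak{q}}^{m}$ to $\alpha_m(D_{\mathfrak{p}}^m) \subset D_{\mathfrak{q}}^m$. Your sketch omits the injectivity step entirely, and without it the existence of $\mathfrak{q}$ does not follow; this is also exactly where the totally imaginary hypothesis and the cyclotomic compatibility (1) are used, not merely ``for the mod-$2$ arguments'' later.
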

\begin{proof}
First, we prove (i). Let $V \subset D_{\mathfrak{p}}^{m+1}$ be a pro-$2$ Sylow subgroup. Write $V^* = V \cap I_{\mathfrak{p}}^{m+1}$ where $I_{\mathfrak{p}}^{m+1}$ denotes the inertia subgroup of $D_{\mathfrak{p}}^{m+1}$. Since $2$ is not the residue characteristic of $\mathfrak{p}$, we may conclude that $V$ fits into the exact sequence
$$
1 \to V^* \to V \to \mathbb{Z}_2 \to 1
$$
where $V^* \xrightarrow{\sim} \mathbb{Z}_2$ (c.f. Proposition 1.1 (vi) in \cite{ST1}). Since $L$ is totally imaginary, it holds by Lemma 3.4 that $G_L^{m+1}$ has no $2$-torsions. In particular, by the discussions in p.11 in \cite{CS}, and Lemma 3.3 (i), we may conclude that $\alpha_{m+1}$ restricts to an injection on $V$. Then by Proposition 2.5 in \cite{CS}, the image of $V$ is a $(\star_2)$ subgroup of $G_L^{m+1}$. We write $V_0$ for the image of $V$ in $G_L^{m+1}$. Now by the local theory established in \cite{ST1} (more precisely, Proposition 1.22 in \cite{ST1}) $V_0$ determines a unique prime $\mathfrak{q} \in \mathscr{P}_L^{\text{fin}}$, more specifically, a unique decomposition subgroup $D_{\mathfrak{q}}^m \subset G_L^{m}$. In particular, by Proposition 3.10 in \cite{CS}, the image of $D_{\mathfrak{p}}^m$ is contained in $D_{\mathfrak{q}}^m$. Now by condition (1), the following diagram commutes
$$
\begin{tikzcd}
	{D_{\mathfrak{p}}^m} && {D_{\mathfrak{q}}^m} \\
	& {\widehat{\mathbb{Z}}^{\times}.}
	\arrow[from=1-1, to=1-3]
	\arrow["{\chi_K|_{D_{\mathfrak{p}}^m}}"', from=1-1, to=2-2]
	\arrow["{\chi_L|_{D_{\mathfrak{q}}^m}}", from=1-3, to=2-2]
\end{tikzcd}
$$
It follows from Lemma 3.3 (ii), that $p_{\mathfrak{p}} = p_{\mathfrak{q}}$. Then it follows from Lemma 3.3 (iii) that we have $f_{\mathfrak{q}} = 1$. This completes the proof of assertion (i). The proof of assertion (ii) is identical to the proof of Lemma 3.3 (ii) in \cite{Ho3}.
\end{proof}
Now we can prove Theorem 1.5. 
\begin{proof}[Proof of Theorem 1.5]
Notice that the proof of (1) $\implies$ (2) is immediate. We only need to verify (2) $\implies$ (1). \par 
Consider the open homomorphism
$$
\alpha_m: G_K^m \to G_L^m
$$
induced by $\alpha_{m+3}$. Consider an open normal subgroup $H' \subset G_L^m$ corresponding to a totally imaginary finite extension $F'/L$ contained in $L_m$. We write $H \subset G_K^m$ for the inverse image of $H'$ by $\alpha_m$, and $F/K$ for the finite extension of $K$ corresponding to $H$ which is contained in $K_m$. \par 
Write $\widetilde{H} \subset G_K^{m+3}$ (resp. $\widetilde{H'} \subset G_L^{m+3}$) for the inverse image of $H$ (resp. $H'$) via the natural surjection $G_K^{m+3} \twoheadrightarrow G_K^m$ (resp. $G_L^{m+3} \twoheadrightarrow G_L^m$). Thus, we have the following isomorphisms
$$
\widetilde{H}^3 \cong G_F^3 ~;~ \widetilde{H'}^3 \cong G_{F'}^3.
$$
Now consider the following commutative diagram with exact rows:
$$
\begin{tikzcd}
1 \arrow[r] & \widetilde{H} \arrow[r] \arrow[d,twoheadrightarrow] & G_K^{m+3} \arrow[d,"\alpha_{m+3}"] \arrow[r] & \text{Gal}(F/K) \arrow[d,"\beta"] \arrow[r] & 1 \\
1 \arrow[r] & \widetilde{H'} \arrow{r} & G_L^{m+3} \arrow[r] & \text{Gal}(F'/L) \arrow[r] & 1.
\end{tikzcd}
$$
We have the following commutative diagram:
$$
\begin{tikzcd}
G_F^{\text{ab}} \arrow[r,twoheadrightarrow] \arrow[d,twoheadrightarrow] & A_2(F) \arrow[d,"\varphi",twoheadrightarrow] \\
G_{F'}^{\text{ab}} \arrow[r,twoheadrightarrow] & A_2(F')
\end{tikzcd}
$$
where $A_2(F) := G_F^{\text{ab}} \otimes_{\widehat{\mathbb{Z}}} \mathbb{Z}/2\mathbb{Z}$ (resp. $A_2(F') := G_{F'}^{\text{ab}} \otimes_{\widehat{\mathbb{Z}}} \mathbb{Z}/2\mathbb{Z}$), the horizontal arrows are natural surjections, and the vertical arrows are surjections induced by the surjection $\widetilde{H} \twoheadrightarrow \widetilde{H'}$ and are equivariant with respective to $\beta$.\par 
On the other hand, since the diagram
$$
\begin{tikzcd}
	{G_K^{m+3}} && {G_L^{m+3}} \\
	& {\widehat{\mathbb{Z}}^{\times}}
	\arrow["{\alpha_{m+3}}", from=1-1, to=1-3]
	\arrow["{\chi_K}", from=1-1, to=2-2]
	\arrow["{\chi_L}"', from=1-3, to=2-2]
\end{tikzcd}
$$
commutes, we may conclude that we have a commutative diagram:
$$
\begin{tikzcd}
	{G_F^{\text{ab}}} && {G_{F'}^{\text{ab}}} \\
	& {\widehat{\mathbb{Z}}^{\times}}
	\arrow[from=1-1, to=1-3]
	\arrow["{\chi_F}", from=1-1, to=2-2]
	\arrow["{\chi_{F'}}"', from=1-3, to=2-2]
\end{tikzcd}
$$
where $\chi_F$ and $\chi_{F'}$ are the cyclotomic characters of $F$ and $F'$, respectively. By Lemma 3.5 it holds that the map $ \varphi:A_2(F) \twoheadrightarrow A_2(F')$ is Frobenius-preserving (c.f. Definition 2.7 in \cite{Ho3}). We conclude by Corollary 2.8 in \cite{Ho3} that the map $A_2(F) \twoheadrightarrow A_2(F')$ determines a unique field embedding $\phi: F' \hookrightarrow F$. \par 
Now take $g \in \text{Gal}(F/K)$ and $g' := \beta(g) \in \text{Gal}(F'/L)$. We have the following commutative diagram:
$$
\begin{tikzcd}
	{F'} & F \\
	{F'} & F
	\arrow["\phi", hook, from=1-1, to=1-2]
	\arrow["{g'}"', from=1-1, to=2-1]
	\arrow["g", from=1-2, to=2-2]
	\arrow["{\phi'}", hook, from=2-1, to=2-2]
\end{tikzcd}
$$
where $\phi'$ is defined to be $\phi' := g \phi g'^{-1}$.\par
Since the map $\varphi: A_2(F) \twoheadrightarrow A_2(F')$ is equivariant with respect to $\beta$, we have $\varphi(g.x) = g'\varphi(x)$ for all $x \in A_2(F)$. By Corollary 2.8 in \cite{Ho3}, $\varphi$ arises uniquely from $\phi$. On the other hand, $\phi'$ also determines a map $\varphi' : A_2(F) \twoheadrightarrow A_2(F')$ such that the following diagram commutes:
$$
\begin{tikzcd}
A_2(F) \arrow[r,"\varphi",twoheadrightarrow] \arrow[d,"\wr"] & A_2(F') \arrow[d,"\wr"] \\
A_2(F) \arrow[r,"\varphi'",twoheadrightarrow] & A_2(F')
\end{tikzcd}
$$ 
where the left vertical arrow is given by the action of $g$ and the right vertical arrow is given by the action of $g'$. In particular, $\varphi = \varphi'$ by the commutativity of the above diagram and the fact that $\varphi$ is $\beta$-equivariant. Hence $\phi' = \phi$. So $\phi = g \phi g'^{-1} = g \phi \beta(g)^{-1} \implies \phi \beta(g) = g \phi$, i.e. $\beta$ arises uniquely from $\phi$.\par
On the other hand, we have the following identifications:
$$
G_K^m \cong \varprojlim_F \text{Gal}(F/K)~;~ G_L^m \cong \varprojlim_{F'} \text{Gal}(F'/L)~~~~~~(\dagger)
$$
where $F'$ ranges over all finite Galois totally imaginary extensions of $L$ contained in $L_m$, and $F$ ranges over finite Galois extensions of $K$ contained in $K_m$ corresponding to $F'$. \par 
Write $\text{Hom}_{\beta}(F',F)$ for the set of field embeddings $F' \to F$ that arise from $\beta$. It follows from the identification $(\dagger)$ that we have
$$
\varprojlim_{F'} ~\text{Hom}_{\beta}(F',F) \xrightarrow{\sim} \text{Hom}_{\alpha_m}(L_m,K_m)
$$
where $\text{Hom}_{\alpha_m}(L_m,K_m)$ is the set of all field embeddings $L_m \to K_m$ arising from $\alpha_m$. Furthermore, it follows from the various constructions involved, that $\text{Hom}_\beta(F',F)$ is (finite) non-empty, hence $\text{Hom}_{\alpha_m}(L_m,K_m)$ is also non-empty since it is the inverse limit of non-empty finite sets. Thus, we can conclude that there exists a field embedding $\tilde \beta :L_m \to K_m$ arising from $\alpha_m$, and by Proposition 2.1 $\tilde \beta$ is unique.
\end{proof}
\bibliography{ref.bib}
\bibliographystyle{alpha}

\end{document}